\documentclass[13pt]{amsart}% {amsproc}

\usepackage{graphicx}
\usepackage{float}
\usepackage[nice]{nicefrac}
\usepackage{amssymb}
\DeclareGraphicsRule{.tif}{png}{.png}{`convert #1 `dirname #1`/`basename #1.tif`.png}
\usepackage{amsmath,amsthm,amscd,amssymb, mathrsfs}

\usepackage{latexsym}

\numberwithin{equation}{section}
\numberwithin{figure}{section}

\theoremstyle{plain}
\newtheorem{theorem}{Theorem}[section]
\newtheorem{lemma}[theorem]{Lemma}

\newtheorem{proposition}[theorem]{Proposition}
\newtheorem{conjecture}[theorem]{Conjecture}

\theoremstyle{definition}
\newtheorem{definition}[theorem]{Definition}

\theoremstyle{remark}
\newtheorem{remark}[theorem]{Remark}

\newtheorem{case[theorem]}{Case}

%  alphabet calligraphie
%

\def\({\left(}
\def\){\right)}
\def\[{\left[}
\def\]{\right]}
\def\<{\langle}
\def\>{\rangle}

\title[Weak version of restriction estimates ]{Weak version of restriction estimates for spheres and paraboloids in finite fields }

\author{Hunseok Kang and Doowon Koh}

\address{
Department of Mathematics\\
Soongsil University \\
Seoul, 156-743 Korea}
\email{hkang@ssu.ac.kr}

\address{
Department of Mathematics\\
Chungbuk National University \\
Cheongju city, Chungbuk-Do 361-763 Korea}
\email{koh131@chungbuk.ac.kr}

\keywords{ homogeneous function, finite field, restriction operators }
\thanks{This research was supported by Basic Science Research Program through the National
Research Foundation of Korea funded by the Ministry of Education, Science and Technology(2012R1A1A1014924, ~ 2012R1A1A1001510).}

\subjclass[2010]{Primary: 42B05; Secondary 43A32, 43A15 }

\begin{document}

\begin{abstract} We study $L^p-L^r$ restriction estimates for algebraic varieties in $d$-dimensional
vector spaces over finite fields. Unlike the Euclidean case, if the dimension $d$ is even, then it is
conjectured that the  $L^{(2d+2)/(d+3)}-L^2$ Stein-Tomas restriction result can be improved to the
$L^{(2d+4)/(d+4)}-L^2$ estimate for both  spheres and paraboloids in finite fields.
In this paper we show that the conjectured $L^p-L^2$ restriction estimate holds in the specific case when
test functions under consideration are  restricted to $d$-coordinate functions or homogeneous functions of
degree zero. To deduce our result, we use the connection between the restriction phenomena for our
varieties in $d$ dimensions and those for homogeneous varieties in $(d+1)$ dimensions.
\end{abstract}

\maketitle
%\vspace{.4in}
%\tableofcontents
%\setstretch{1.25}

\section{Introduction}

Let $V$ be a subset of ${\mathbb R}^d, d\geq 2,$ and $d\sigma$ a positive measure
supported on $V$. The classical restriction problem asks us to determine $1\leq p,r \leq \infty$ such that
the following restriction estimate holds:
\begin{equation}\label{defr}
  \|\widehat{f}\|_{L^r(V, d\sigma)}\le C_{p,r,d} ~\|f\|_{L^p(\mathbb R^d)}
\end{equation}
for every Schwarz function $f:\mathbb R^d \to \mathbb C.$
By duality, the restriction estimate (\ref{defr}) is same as the following extension estimate:
$$ \|(gd\sigma)^\vee\|_{L^{p^\prime}(\mathbb R^d)} \le C_{p,r,d} ~\|g\|_{L^{r^\prime}(V, d\sigma)},$$
where $p^{\prime}=p/(p-1)$ and $r^{\prime}=r/(r-1).$
This problem was addressed and studied by E.M. Stein (\cite{St78}).
Much attention has been given to this problem, in part because it is closely related to other important problems such as the Falconer distance problem, the Kakeya problem, and the Bochner-Riestz problem
(for example, see \cite{Er05, BCT06, Ca92, Ta99}). The complete answer to the restriction problem is known only for certain lower dimensional hypersurfaces.
For instance, Zygmund (\cite{Zy74}) established the restriction conjecture for the circle and the parabola in the plane.
Barcelo (\cite{Ba85}) and Wolff (\cite{Wo01}) also solved it for the cone of $\mathbb R^3$ and $\mathbb R^4$, respectively.
However,  the restriction conjecture remains open in other higher dimensions.
The best known result for the cone of $\mathbb R^d, d\geq 5$, is due to Wolff (\cite{Wo01}) who utilized the bilinear restriction method.
Terence Tao (\cite{Ta03}) also used the method to derive the best known restriction results on the sphere and paraboloid of $\mathbb R^d, d\geq 3.$
However, it has been believed that classically used analytical approaches are not enough to settle down the restriction problem.
We refer reader to Tao's survey paper \cite{Ta04} and references therein for currently known skills to deduce restriction results in the Euclidean case. \\

In recent years, problems in the Euclidean space have been studied in the finite field setting.
Motivation on the study of  Euclidean problems in finite fields is to understand the original problems in simple finite field structure.
In 1999,  Tom Wolf (\cite{Wo99}) formulated the Kakeya problem in finite fields and
new results on the problem were addressed  in the subsequent papers (see \cite{Ro01, MT04, Ta05}).
Surprisingly, Dvir (\cite{Dv09}) proved the finite field Kakeya conjecture by  beautifully simple, new argument based on the polynomial method.
His work has inspired researchers to further efforts for seeking solutions to other analysis problems in finite fields.
In \cite{MT04}, Mockenhaupt and Tao first investigated the Fourier restriction problem for various algebraic varieties in the finite field setting
and they addressed interesting results on this problem. Further efforts to understand the finite field restriction problem have been made by other researchers (see, for example, \cite{IK09, IK10, Ko13, KS12, KS1, LL13, LL10}). In particular,  the finite field restriction problem for cones, paraboloids, and spheres have been mainly studied, but known results are far from the conjectured results in higher dimensions.  \\

When we study analogue of Euclidean problems in finite fields, we often find an unprecedented phenomenon
which never occurs in the Euclidean case.
 It is well known that if $V\subset \mathbb R^d$ is the sphere or a compact subset of the paraboloid,
then  $ p_0=(2d+2)/(d+3)$ gives the sharp $p $ exponent for  $L^p-L^2 $ restriction estimates for $V.$
The number $p_0$ is called the Stein-Tomas exponent for the $L^p-L^2$ restriction inequality.
On the contrary to the Euclidean case,   it is possible to improve the Stein-Tomas exponent $p_0$  if $V$ is the paraboloid in {\bf even} dimensional vector spaces over finite fields. For example, Mockenhaupt and Tao (\cite{MT04}) proved
the $L^{4/3}-L^2$ restriction estimate  for the parabola lying in two dimensional vector spaces over finite fields.
For even dimensions $d\geq 4$,  A. Lewko and M. Lewko (\cite{LL10}) obtained  the $L^{2d^2/(d^2+2d-2)}-L^2$ restriction result for the paraboloid in the  finite field setting. These results are clearly better than the Stein-Tomas inequality. Here, we point out that if the dimension $d\geq 3$ is {\bf odd} and $-1$ is a square number, then it is impossible to improve the Stein-Tomas restriction estimate for spheres or paraboloids in finite field case. For this reason, we shall just focus on studying  the $L^p-L^2$ restriction estimates for spheres or paraboloids in {\bf even} dimensions.  \\

 When $-1$ is a square number in the underlying finite field,  it is conjectured that the $L^{(2d+4)/(d+4)}-L^2$ restriction estimate is the best possible result on the $L^p-L^2$ estimate for the sphere or the paraboloid in {\bf even} dimensional vector spaces over finite fields (see Conjecture \ref{conjecture1}).
The conjecture is open except for $d=2,$ and  the aforemensioned result due to A. Lewko and M. Lewko is far from the conjectured one. Furthermore,  there is no known result for spheres in even dimensions $d\geq 4$ which improves on the Stein-Tomas exponent.
The main purpose of this paper is to find  a class of test functions for which the conjectured $L^{(2d+4)/(d+4)}-L^2 $ restriction estimate holds for the sphere or the paraboloid in even dimensional vector spaces over finite fields.  The main idea to derive our results is to use a connection between restriction estimates for homogeneous varieties in $(d+1)$ dimensions and those for the sphere or the paraboloid in $d$-dimensional vector spaces over finite fields.
\section{ Weak version of  restriction problems}
To precisely state our main results, we shall introduce the weak version of restriction problems in the finite field setting. Roughly speaking,  we investigate the $L^p-L^2$ restriction estimates for algebraic varieties in the specific case when the test functions are restricted to specific classes of functions rather than all functions on vector spaces over finite fields. We begin by reviewing the restriction problem for algebraic varieties in finite fields.
\subsection{Review of the restriction problem}
Let  $\mathbb F_q^d, d\geq 2,$ be the $d$-dimensional vector spaces over finite fields  $\mathbb F_q$ with $q$ elements.  We assume that the characteristic of $\mathbb F_q$ is greater than two.
The space $\mathbb F_q^d$ is equipped with a counting measure $dm$, by setting, for any function $g: (\mathbb F_q^d, dm) \to \mathbb C,$
$$\int_{\mathbb F_q^d} g(m)~dm = \sum_{m\in \mathbb F_q^d} g(m).$$
Here and throughout the paper, we write the notation $(\mathbb F_q^d, dm)$ for the space $\mathbb F_q^d$ with the counting measure $dm.$
On the contrary to the space $(\mathbb F_q^d, dm)$,  we endow its dual space with a normalized counting measure $dx.$
The dual space of $(\mathbb F_q^d, dm)$  is denoted by the notation $(\mathbb F_q^d, dx).$
Recall that if $g: (\mathbb F_q^d, dm) \to \mathbb C,$ then its Fourier transform $\widehat{g}$ is a function on the dual space $(\mathbb F_q^d, dx).$ Thus,  for $x\in (\mathbb F_q^d, dx)$,
$$\widehat{g}(x)=\int_{\mathbb F_q^d} \chi(-m\cdot x) g(m)~ dm=\sum_{m\in \mathbb F_q^d} \chi(-m\cdot x) g(m),$$
where $\chi$ denotes a nontrivial additive character of $\mathbb F_q.$
Also recall that if $f: (\mathbb F_q^d, dx) \to \mathbb C$, then its inverse Fourier transform $f^\vee$ can be defined by
$$ f^\vee(m)=\int_{\mathbb F_q^d} \chi( m\cdot x) f(x)~ dx= \frac{1}{q^d} \sum_{x\in \mathbb F_q^d} \chi(m\cdot x) f(x)$$
where $m\in (\mathbb F_q^d, dm).$  Using the orthogonality relation of $\chi$, one can easily show that $(\widehat{g})^\vee(m)=g(m)$ for $g:(\mathbb F_q^d, dm)\to \mathbb C.$ This provides us of the Fourier inversion theorem:
\begin{equation}\label{FI} g(m)=\int_{\mathbb  F_q^d} \chi(m\cdot x) \widehat{g}(x)~dx =\frac{1}{q^d} \sum_{x\in \mathbb F_q^d} \chi(m\cdot x) \widehat{g}(x).\end{equation}

 Let $V$ be an algebraic variety in the dual space $(\mathbb F_q^d, dx).$ The variety $V$ is equipped with the normalized surface measure $d\sigma$, which is defined by the relation
$$ \int f(x) ~d\sigma(x) = \frac{1}{|V|}\sum_{x\in V} f(x),$$
where $f:(\mathbb F_q^d, dx) \to \mathbb C.$
Observe that we can write $d\sigma(x)= \frac{q^d}{|V|} V(x)~dx.$ Here, and throughout this paper, we write $A(x)$ for the characteristic function on a set $A\subset \mathbb F_q^d$ and $|A|$ denotes the cardinality of the set $A.$\\

The restriction problem for the variety $V$ is to determine $1\leq p, r\leq \infty$ such that
the following restriction estimate holds:
\begin{equation}\label{restriction} \|\widehat{g}\|_{L^r(V, d\sigma)} \leq C \|g\|_{L^p(\mathbb F_q^d, dm)} \quad\mbox{for all functions}~~g:\mathbb F_q^d \to \mathbb C,\end{equation}
where the constant $C>0$ is independent of functions $g$ and the size of the underlying finite field $\mathbb F_q.$ The notation $R(p\to r)\lesssim 1$ is used to indicate that the restriction inequality (\ref{restriction}) holds. In this case, we say that  the $L^p-L^r$ restriction estimate holds.
 By duality,  inequality (\ref{restriction}) is same as the following extension estimate:
\begin{equation}\label{extension} \|(gd\sigma)^\vee\|_{L^{p^\prime}(\mathbb F_q^d, dm)} \leq C \|g\|_{L^{r^\prime}(V, d\sigma)}.\end{equation}
When this extension inequality holds, we say that the $L^{r^\prime}-L^{p^\prime}$ extension estimate holds and  we write  $R^*(r^\prime\to p^\prime)\lesssim 1$ for it.  Thus, $R(p\to r)\lesssim 1 $ if and only if $R^*(r^\prime \to p^\prime)\lesssim 1.$
\begin{remark}  $A\lesssim B$ for $A,B>0$ means that there exists $C>0$ independent of $q=|\mathbb F_q|$ such that $A\le CB.$ We also write $B\gtrsim A$ for $A\lesssim B.$ In addition, $A\sim B$ means that $A\lesssim B$ and $A\gtrsim B.$ We can define $R(p\to r)$ to be the best constant such that the restriction estimate (\ref{restriction}) holds. $R(p\to r)$ may depend on $q$. The restriction problem is to determine $p,r$ such that $R(p\to r)\lesssim 1.$
\end{remark}

When $V\subset \mathbb F_q^d$ is the sphere or the paraboloid, the necessary conditions for $R(p\to r)\lesssim 1$ are well known.
In particular,  necessary conditions for $R(p\to 2)\lesssim 1$  mainly depend on the biggest size of the affine subspaces lying in the variety $V.$
For example,   if $-1\in \mathbb F_q$ is a square number and $V\subset \mathbb F_q^d$ is the sphere or the paraboloid, then one can construct an affine subspace $H \subset \mathbb F_q^d$ such that
$|H|=q^{(d-1)/2}$ for $d\geq 3$ odd and $|H|=q^{(d-2)/2}$ for $d\geq 2$ even (see \cite{IK09} and \cite{IR07}). Taking $g(x)=H(x)$ in (\ref{extension}), we can directly deduce that the necessary conditions for $R(p\to 2)\lesssim 1$ are given by
\begin{equation}\label{oddN} 1\leq p\leq \frac{2d+2}{d+3}\quad \mbox{for odd}~~ d\ge 3\end{equation}
and
\begin{equation}\label{evenN} 1\leq p\leq \frac{2d+4}{d+4}\quad \mbox{for even}~~ d\ge 2.\end{equation}
It was proved in \cite{MT04} and \cite{IK08} that the Stein-Tomas inequality  holds for the sphere and the paraboloid, respectively.
Therefore,  if $d\ge 3$ is {\bf odd}, then (\ref{oddN}) is also the sufficient condition for $R(p\to 2)\lesssim 1.$
However, when the dimension $d$ is {\bf even}, it is not known that (\ref{evenN}) is the sufficient condition for $R(p\to 2)\lesssim 1$ except for dimension two.
For this reason, by the nesting property of norms,  one may want to establish  the following conjecture.
\begin{conjecture}\label{conjecture1} Let $V\subset \mathbb F_q^d$ be the sphere or the paraboloid.  Assuming that $-1\in \mathbb F_q$ is a square number and $d\geq 4$ is even, then
$$R\left(\frac{2d+4}{d+4}\to 2\right)\lesssim 1.$$
\end{conjecture}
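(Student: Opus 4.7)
The conjecture is the central open problem of finite-field restriction theory in even dimensions, and I should concede at the outset that a full proof lies beyond present techniques; what I can realistically offer is a plan that pushes the paper's own machinery as far as it seems capable of going for arbitrary test functions, not just the restricted classes. By duality the inequality $R((2d+4)/(d+4)\to 2)\lesssim 1$ is equivalent to the extension estimate $R^\ast(2\to (2d+4)/d)\lesssim 1$, and by $TT^\ast$ followed by real interpolation with the trivial $L^\infty$ bound this is in turn equivalent to a polynomial-in-$q$ control on the $k$-fold additive energy of $V$ for $k$ near $(d+4)/2$. The first move is therefore to recast the conjecture as a combinatorial statement: the number of solutions to $v_1+\cdots+v_k=m$ with $v_i\in V$ is at most $q^{(k-1)(d-1)}|V|$ on average in $m$, up to a constant.

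The strategy I would pursue exploits the connection between $V\subset\mathbb F_q^d$ and the homogeneous variety in $\mathbb F_q^{d+1}$ lying over it, the same connection developed later in this paper. Decompose any test function $g:\mathbb F_q^d\to\mathbb C$ as $g=g_0+g_1$, where $g_0$ belongs to the class of $d$-coordinate and $0$-homogeneous functions accessible to the paper's weak-version theorems, and $g_1$ is the residual. On $g_0$, invoke those theorems directly to obtain the conjectured endpoint. For $g_1$, perform a dyadic level-set decomposition of $\widehat{g_1}$ on $V$ and pair it against $\widehat{d\sigma}$; since $|\widehat{d\sigma}(m)|$ decays at the generic Stein-Tomas rate away from the exceptional $(d-2)/2$-dimensional affine subspaces that actually force the conjectured exponent, the generic directions contribute an acceptable amount. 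The remaining contribution is concentrated along the extremal subspaces, and here the lift to $\mathbb F_q^{d+1}$ is designed to convert a codimension-two extremal configuration in $d$ dimensions into a codimension-one configuration on the cone, where Plancherel is available.

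\textbf{Main obstacle.} The step that has blocked every previous attempt, and that I expect to block this one, is controlling the cross-interactions between slices along different isotropic directions in the $g_1$ piece. Equivalently, one needs a nontrivial upper bound on the $k$-fold additive energy $E_k(V)$ at the critical $k=(d+4)/2$ that saves a polynomial factor in $q$ beyond Cauchy-Schwarz; the best known bounds, due to A.~and M.~Lewko, fall short by a power of $q$ that grows with $d$, and recovering the missing savings appears to require either a polynomial-method input in the spirit of Dvir's Kakeya argument, or a genuinely new bilinear restriction reduction exploiting the curvature of $V$ transverse to the maximal isotropic subspaces. Without such a new ingredient the above route will not close the gap for arbitrary $g$, and the best one can hope for is precisely the weak version of the conjecture that the paper goes on to prove.
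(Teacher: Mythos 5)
The statement you were asked to prove is Conjecture \ref{conjecture1}, which is \emph{open}: the paper itself offers no proof of it, and explicitly says so except in dimension $d=2$. What the paper actually establishes are the weak versions, Theorems \ref{main3} and \ref{main4}, where the test functions are restricted to $d$-coordinate lay functions or homogeneous functions of degree zero, proved by lifting to the homogeneous varieties $C, H_j \subset \mathbb F_q^{d+1}$ and running Stein--Tomas there (Lemmas \ref{keyR} and \ref{Koh}). Your proposal correctly recognizes this state of affairs and honestly does not claim a proof, so there is no complete argument to verify; the ``main obstacle'' you name is precisely the open problem, and your sketch does not close it.

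Two concrete points where the sketch, as written, would not survive being made precise. First, the reduction of $R^\ast(2\to (2d+4)/d)\lesssim 1$ to a $k$-fold additive energy bound is not an equivalence: that dictionary requires the extension exponent $p'=(2d+4)/d$ to be an even integer, which fails for every even $d\geq 4$ (e.g.\ $p'=3$ when $d=4$), and in any case your stated critical value $k=(d+4)/2$ does not match $p'/2$; one would need fractional moments or a lossy pigeonholing, which is exactly where a power of $q$ is lost. Second, the decomposition $g=g_0+g_1$ buys nothing: the classes of $d$-coordinate lay and degree-zero homogeneous functions are not complemented in any way that interacts with the $L^2(V,d\sigma)$ norm, there is no orthogonality between $\widehat{g_0}$ and $\widehat{g_1}$ on $V$, and for the residual $g_1$ you are back to the general case. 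Moreover, your description of the obstruction conflates two different objects: for spheres and paraboloids the Fourier decay $|(d\sigma)^\vee(m)|\lesssim q^{-(d-1)/2}$ holds uniformly for all $m\neq 0$; what forces the exponent $(2d+4)/(d+4)$ is the presence of $(d-2)/2$-dimensional affine subspaces inside $V$ itself (the construction behind (\ref{evenN})), not a region of bad decay of $(d\sigma)^\vee$. So the ``generic directions versus exceptional subspaces'' splitting of $\widehat{d\sigma}$ has no content here, and the lift to $\mathbb F_q^{d+1}$, as the paper uses it, only converts the restricted classes of test functions into general functions on the homogeneous variety --- it does not do so for an arbitrary $g_1$.
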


\subsection{$d$-coordinate lay functions and homogeneous functions of degree zero}
We introduce specific test functions  on which  the restriction operator for the sphere or the paraboloid acts.
The following two definitions are closely related to  a weak version of the restriction problem for the paraboloid.
\begin{definition}
A function $g:(\mathbb F_q^d, dm)\to \mathbb C$ is called a $d$-coordinate lay function if it satisfies that for each $(m^\prime, m_d)\in \mathbb F_q^{d-1}\times \mathbb F_q,$
$$ g(m^\prime, m_d)=g(m^\prime, s m_d) \quad\mbox{for all} ~~s\in \mathbb F_q\setminus\{0\}.$$\end{definition}

\begin{definition} We write $R_{d-lay}(p\to r)\lesssim 1$ if the restriction estimate (\ref{restriction}) holds for all $d$-coordinate lay functions $g:(\mathbb F_q^d, dm) \to \mathbb C.$
\end{definition}

The weak version of the restriction operator for the sphere shall be defined by taking homogeneous  functions of degree zero as test functions.
As usual,  a function $g:(\mathbb F_q^d, dm)\to \mathbb C$ is named a homogeneous function of degree zero if $ g(s m)=g(m)$ for $m \in \mathbb F_q^d, ~s \in \mathbb F_q\setminus\{0\}.$
\begin{definition} We write $R_{hom}(p\to r)\lesssim 1$ if the restriction estimate (\ref{restriction}) holds for all homogeneous functions of degree zero, $g:(\mathbb F_q^d, dm) \to \mathbb C.$
\end{definition}

\subsection{Statement of main results}  Our first result below is related to the parabolical restriction estimate for $d-lay$ test functions.
\begin{theorem}\label{main3} Let $d\sigma$ be the normalized surface measure on the paraboloid
$P:=\{x\in \mathbb F_{q}^d: x_1^2+\cdots+x_{d-1}^2=x_d\}.$ If  $d\geq 2$ is even, then we have
$$ R_{d-lay}\left(\frac{2d+4}{d+4}\to 2\right)\lesssim 1.$$
\end{theorem}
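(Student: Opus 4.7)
The plan is to reduce the desired restriction estimate for the paraboloid $P \subset \mathbb{F}_q^d$ to a Stein-Tomas-type estimate on a related $(d+1)$-dimensional homogeneous hypersurface, through a simple lifting. Consider the projective closure of $P$,
\[
W := \{(x_0, x_1, \ldots, x_d) \in \mathbb{F}_q^{d+1} : x_0 x_d = x_1^2 + \cdots + x_{d-1}^2\},
\]
a non-degenerate homogeneous quadric of dimension $d$ with $|W| = q^d$. The Stein-Tomas exponent for a hypersurface in $\mathbb{F}_q^{d+1}$ is $\frac{2(d+1)+2}{(d+1)+3} = \frac{2d+4}{d+4}$, which matches the target, and the restriction estimate $R_W\bigl(\frac{2d+4}{d+4}\to 2\bigr) \lesssim 1$ should be available from finite-field Stein-Tomas results for homogeneous quadrics/cones (in the tradition of Mockenhaupt-Tao and Iosevich-Koh).

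Given a $d$-lay function $g$ on $\mathbb{F}_q^d$, I would define its lift
\[
G(n_0, n', n_d) := g(n', n_d)\, \mathbf{1}_{n_0 = 0}, \quad (n_0, n', n_d) \in \mathbb{F}_q^{d+1}.
\]
Because $G$ is supported on the hyperplane $\{n_0 = 0\}$, one has $\|G\|_{L^p(\mathbb{F}_q^{d+1}, dm)} = \|g\|_{L^p(\mathbb{F}_q^d, dm)}$ for every $p$, and $\widehat{G}(x_0, x', x_d) = \widehat{g}(x', x_d)$ is independent of $x_0$.

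The heart of the argument is then the elementary inequality $\|\widehat{g}\|_{L^2(P, d\sigma_P)} \lesssim \|\widehat{G}\|_{L^2(W, d\sigma_W)}$, which I would prove by direct computation, using the $d$-lay hypothesis crucially. Writing $g_0(m') := g(m', 0)$ and $g_1(m') := g(m', m_d)$ for any $m_d \neq 0$, one has
\[
\widehat{g}(x', x_d) = A(x') + q B(x')\, \delta_0(x_d), \quad A := \widehat{g_0} - \widehat{g_1},\; B := \widehat{g_1}.
\]
Splitting $W$ into $W \cap \{x_0 \neq 0\}$ (where, for each $(x', x_d)$ with $x_d \neq 0$, exactly one $x_0 = |x'|^2/x_d$ lies on $W$) and $W \cap \{x_0 = 0\}$ (a full $\mathbb{F}_q$-fiber over $C \times \{0\}$, with $C = \{x' \in \mathbb{F}_q^{d-1} : |x'|^2 = 0\}$ the isotropic cone), direct summation yields
\[
\|\widehat{G}\|_{L^2(W)}^2 = \frac{q-1}{q^d}\sum_{x' \in \mathbb{F}_q^{d-1}} |A(x')|^2 + \frac{q}{q^d}\sum_{x' \in C} |A(x') + qB(x')|^2,
\]
whereas, since $\delta_0(|x'|^2) = \mathbf{1}_C(x')$,
\[
\|\widehat{g}\|_{L^2(P)}^2 \leq \frac{1}{q^{d-1}}\sum_{x' \in \mathbb{F}_q^{d-1}} |A(x')|^2 + \frac{1}{q^{d-1}}\sum_{x' \in C} |A(x') + qB(x')|^2.
\]
Term-by-term comparison gives $\|\widehat{g}\|_{L^2(P)}^2 \leq \frac{q}{q-1}\|\widehat{G}\|_{L^2(W)}^2 \lesssim \|\widehat{G}\|_{L^2(W)}^2$. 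The $d$-lay hypothesis is essential here: it forces $\widehat{g}(x', x_d) = A(x')$ identically in $x_d \neq 0$, so restricting to $W$ rather than just to $P$ samples the single $A$-value $q-1$ times rather than once, exactly cancelling the $\sqrt{q}$ loss that an arbitrary $g$ would incur against the mismatch of measures $|W| = q \cdot |P|$.

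Chaining the three facts, I would obtain
\[
\|\widehat{g}\|_{L^2(P, d\sigma_P)} \lesssim \|\widehat{G}\|_{L^2(W, d\sigma_W)} \lesssim \|G\|_{L^{(2d+4)/(d+4)}(\mathbb{F}_q^{d+1})} = \|g\|_{L^{(2d+4)/(d+4)}(\mathbb{F}_q^d)},
\]
which is exactly the theorem. The main obstacle is the second step, the Stein-Tomas restriction for the quadric $W$: this requires the decay estimate $|(d\sigma_W)^\vee(m)| \lesssim q^{-d/2}$ off an appropriate set, after which the standard Tomas-Stein $TT^*$ argument applies. Since $W$ is, after the linear change of variables $u = (x_0+x_d)/2$, $v = (x_0-x_d)/2$, equivalent to the null cone $\{u^2 = v^2 + x_1^2 + \cdots + x_{d-1}^2\}$ in $\mathbb{F}_q^{d+1}$, the restriction estimates for cones in finite fields available in the literature can be brought to bear, and the rest of the argument is purely algebraic.
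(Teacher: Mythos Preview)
Your proposal is correct and follows the same overall architecture as the paper: lift $g$ to a function $G$ on $\mathbb F_q^{d+1}$, invoke the Stein--Tomas restriction estimate on the homogeneous quadric $W=\{x_0x_d=x_1^2+\cdots+x_{d-1}^2\}$ (which is the paper's variety $C$ after relabeling the extra coordinate), and compare norms. The paper supplies exactly the Fourier decay $|(d\sigma_W)^\vee(\overline m)|\le q^{-d/2}$ for $\overline m\neq 0$ when $d$ is even (Lemma~\ref{keyR}) and runs the $TT^*$ argument (Lemma~\ref{Koh}), so the ``main obstacle'' you flag is already handled in the paper.

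The one genuine difference is the choice of lift. The paper defines $G_g$ on the Fourier side by $\widehat{G_g}(x',x_d,s)=\widehat g(x',x_ds)$ for $s\neq 0$; this makes the identity $\|\widehat g\|_{L^2(P)}^2\sim\|\widehat{G_g}\|_{L^2(C)}^2$ immediate by a change of variables, and the $d$-lay hypothesis enters only when computing $G_g$ explicitly (Proposition~4.1) to bound $\|G_g\|_{L^p}$. You instead take the physical-side lift $G=g\otimes\delta_0$, which makes $\|G\|_{L^p}=\|g\|_{L^p}$ trivial and pushes the use of the $d$-lay hypothesis into the $L^2$ comparison via your $A,B$ decomposition. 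Both routes are short; yours is perhaps slightly more transparent since it avoids the Fourier-inversion computation of Proposition~4.1. One small imprecision: your verbal description of the split of $W$ mixes the roles of $x_0$ and $x_d$ (the ``full $\mathbb F_q$-fiber over $C\times\{0\}$'' is $W\cap\{x_d=0\}$, not $W\cap\{x_0=0\}$), but the displayed formulas for $\|\widehat G\|_{L^2(W)}^2$ and $\|\widehat g\|_{L^2(P)}^2$ are correct, and so is the term-by-term comparison.
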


When the test functions are homogeneous functions of degree zero, we obtain the strong result on the weak version of spherical restriction problems.
\begin{theorem}\label{main4} Let $d\sigma$ be the normalized surface measure on the sphere with nonzero radius
$S_j:=\{x\in \mathbb F_{q}^d: x_1^2+\cdots+x_{d}^2=j\neq 0\}.$  Then if $d\geq 2$ is even, we have
$$ R_{hom}\left(\frac{2d+4}{d+4}\to 2\right)\lesssim 1.$$
\end{theorem}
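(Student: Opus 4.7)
The plan is to transfer the spherical restriction estimate (for homogeneous test functions) in $\mathbb{F}_q^d$ to the classical finite-field Stein-Tomas restriction for the Minkowski cone in one higher dimension. The reduction rests on the observation that if $g:\mathbb{F}_q^d\to\mathbb{C}$ is homogeneous of degree zero, then a direct change of variables $m\mapsto m/t$ in the Fourier sum yields $\widehat{g}(tx)=\widehat{g}(x)$ for all $t\in\mathbb{F}_q^*$ and $x\neq 0$. Hence for every $t\neq 0$, the scaling $x=tx'$ gives
\[
\sum_{x\in S_{t^2}}|\widehat{g}(x)|^2=\sum_{x'\in S_1}|\widehat{g}(x')|^2.
\]

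Fix distinct $a,b\in\mathbb{F}_q$ and put $\phi:=\mathbf{1}_{\{a\}}-\mathbf{1}_{\{b\}}$, so that $\widehat{\phi}(0)=0$ and $\sum_{s\neq 0}|\widehat{\phi}(s)|^2=2q$. Define the lift $G(y,s):=g(y)\phi(s)$ on $\mathbb{F}_q^{d+1}$; then $\widehat{G}(x,x_{d+1})=\widehat{g}(x)\widehat{\phi}(x_{d+1})$ and $\|G\|_{L^p(\mathbb{F}_q^{d+1})}=2^{1/p}\|g\|_{L^p(\mathbb{F}_q^d)}$. Slicing the cone $\cC:=\{(y,s)\in\mathbb{F}_q^{d+1}:y\cdot y=s^2\}$ by the last coordinate gives
\[
\|\widehat{G}\|_{L^2(\cC,d\sigma_\cC)}^2=\frac{1}{|\cC|}\sum_{s\in\mathbb{F}_q}|\widehat{\phi}(s)|^2\sum_{x\in S_{s^2}}|\widehat{g}(x)|^2.
\]
The choice of $\phi$ kills the $s=0$ slice, which would otherwise involve the null cone of $\mathbb{F}_q^d$, and the homogeneity identity collapses every remaining slice to $|S_1|\cdot\|\widehat{g}\|_{L^2(S_1,d\sigma)}^2$. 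Combined with the exact count $|\cC|=q^d$ (by Gauss sum), this yields the clean relation $\|\widehat{G}\|_{L^2(\cC,d\sigma_\cC)}^2\sim\|\widehat{g}\|_{L^2(S_1,d\sigma)}^2$. It then remains to apply a restriction estimate for $\cC$: a direct Gauss-sum calculation for the Minkowski cone in even $d$ produces the uniform bound $|(d\sigma_\cC)^\vee(m)|\le q^{-d/2}$ for all $m\neq 0$ (in fact $\widehat{\cC}$ vanishes identically on $\cC\setminus\{0\}$ due to the cancellation $\sum_{s\neq 0}\eta(s)=0$). Since $\cC$ is a hypersurface in $\mathbb{F}_q^{d+1}$ with Fourier decay of the optimal order $q^{-((d+1)-1)/2}$, the finite-field Stein-Tomas theorem in ambient dimension $d+1$ yields $R_\cC\!\left(\tfrac{2(d+1)+2}{(d+1)+3}\to 2\right)=R_\cC\!\left(\tfrac{2d+4}{d+4}\to 2\right)\lesssim 1$. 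Composing the three steps proves the theorem for $j=1$; the case of $j$ in the same square class follows by the scaling $S_j=\sqrt{j}\cdot S_1$ together with the homogeneity of $\widehat{g}$, and the other square class is handled identically with the twisted cone $\cC_{j_0}:=\{(y,s):y\cdot y=j_0 s^2\}$ for a fixed non-square $j_0$, whose Gauss-sum analysis is the same.

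The step I expect to be the main point of the argument, and precisely the ``connection'' emphasized in the abstract, is the Fourier-decay estimate $|(d\sigma_\cC)^\vee|\lesssim q^{-d/2}$ for the cone. Although the sphere in even $d$ has degenerate Fourier behavior along its own null cone that obstructs a direct Stein-Tomas proof of $(2d+4)/(d+4)$, the lifted cone in $\mathbb{F}_q^{d+1}$ enjoys the optimal hypersurface decay everywhere off the origin thanks to a cancellation special to $d$ even; that decay is exactly strong enough for the hypersurface Stein-Tomas exponent in ambient dimension $d+1$ to coincide with the conjectured sphere exponent $(2d+4)/(d+4)$. Once this decay is in hand, the lift-and-slice computation above is just bookkeeping.
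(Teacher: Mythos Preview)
Your proposal is correct and follows essentially the same approach as the paper: lift $g$ to a tensor product $G(m,s)=g(m)\phi(s)$ on $\mathbb{F}_q^{d+1}$, use the homogeneity of $\widehat g$ to identify all nonzero slices of the cone $H_j=\{(x,t):\,\|x\|^2=jt^2\}$ with $S_j$, and then invoke the Stein--Tomas bound for $H_j$ coming from the optimal Fourier decay $|(d\sigma_j)^\vee|\le q^{-d/2}$ (Lemma~\ref{keyR}). The only cosmetic differences are that the paper takes $\phi=\delta_0$ (so $\widehat{G_g}(x,t)=\widehat g(x)$ and the $t=0$ slice is absorbed by an inequality rather than killed), and the paper works directly with $H_j$ for each $j\in\mathbb{F}_q^*$ instead of reducing to one representative per square class.
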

Conjecture \ref{conjecture1} claims that  if $d\geq 4$ is even, then  $(2d+4)/(d+4)$ is the optimal $p$ value for the $L^p-L^2$ restriction estimate for spheres and paraboloids in finite fields.
According to  Theorem \ref{main3} and \ref{main4}, it seems that the conjecture is true.
In dimension two, this conjecture  was actually proved by Mockenhaupt and Tao (\cite{MT04}) for the parabola and Iosevich and Koh (\cite{IK08}) for the circle. Indeed, they obtained the $L^2-L^4$ extension estimate which exactly implies that $L^{4/3}-L^2$ restriction estimate holds.
However, it is still open in higher even dimensions $d\geq 4$ and the currently best known result for the paraboloid is $R(2d^2/(d^2+2d-2)\to 2)\lesssim 1$ due to A. Lewko and M. Lewko (\cite{LL10}).
In fact, they proved the extension estimate, $R^*(2\to 2d^2/(d^2-2d+2))\lesssim 1$ for even $d\geq 4.$
Notice that this result is much better than the Stein-Tamas inequality, that is $R( (2d+2)/(d+3) \to 2)\lesssim 1.$ For the sphere in even dimensions $d\geq 4,$  the Stein-Tomas inequality was only  obtained by Iosevich and Koh (\cite{IK08}) and it has not been improved.
\subsection{Outline of the remain parts of the paper}
The remain parts of this paper are constructed for providing  proofs of  Theorem \ref{main3} and \ref{main4}. In Section \ref{conerestriction}, we deduce the $L^p-L^2$ restriction estimate for homogeneous varieties in $d+1$ dimensional vector spaces over finite fields $\mathbb F_q$.
Since homogeneous varieties  are a collection of lines, it sounds plausible to expect that the Fourier decay of them is not so good. However, it is not always true. Indeed, we observe that if $(d+1)$ is odd, then the Fourier decay of homogeneous varieties in (d+1) dimensions is enough to derive a good $L^p-L^2$ restriction result from the Stein-Tomas argument. In Section \ref{connection},  we complete the proofs of Theorem \ref{main3} and \ref{main4} by deducing the connection between a weak version of restriction estimates for spheres or paraboloids in $d$ dimensions and  the restriction estimates for homogeneous varieties in $d+1$ dimensions.

\section{ Restriction phenomenon for homogeneous varieties}\label{conerestriction}
Let $d\geq 2$ be an integer. In this section, we derive the $L^p-L^2$ estimate for homogeneous varieties lying in $(\mathbb F_q^{d+1}, d\overline{x})$ where $d\overline{x}$ denotes the normalized counting measure on $\mathbb F_q^{d+1}.$ Define a variety $C\subset (\mathbb F_q^{d+1}, d\overline{x})$ as
$$ C=\{(x, x_{d+1})\in \mathbb F_q^d\times \mathbb F_q: x_1^2+\cdots+x_{d-1}^2=x_d x_{d+1} \}.$$
For each $j\in \mathbb F_q^*,$  define a variety $H_j\subset (\mathbb F_q^{d+1}, d\overline{x})$ by
 $$H_j=\{(x, x_{d+1})\in \mathbb F_q^d\times \mathbb F_q: x_1^2+\cdots+x_d^2=j x_{d+1}^2 \}.$$
Throughout this paper, we denote by $d\sigma_c$ and $d\sigma_j$  the normalized surface measures on $C$ and $H_j$, respectively. In addition,  $(\mathbb F_q^d, d\overline{m})$ denotes the dual space of $ (\mathbb F_q^{d+1}, d\overline{x})$ where $d\overline{m}$ is the counting measure on $\mathbb F_q^{d+1}.$  Recall that if $\overline{m} \in (\mathbb F_q^{d+1}, d\overline{m})$,  then
$$ (d\sigma_c)^\vee(\overline{m})=\int_C \chi(\overline{m}\cdot \overline{x})~ d\sigma_c(\overline{x})= \frac{1}{|C|} \sum_{\overline{x}\in C}  \chi(\overline{m}\cdot \overline{x})$$
and
$$ (d\sigma_j)^\vee(\overline{m})=\int_{H_j} \chi(\overline{m}\cdot \overline{x}) ~d\sigma_c(\overline{x})= \frac{1}{|H_j|} \sum_{\overline{x}\in H_j}  \chi(\overline{m}\cdot \overline{x}).$$

With the above notation, we have the following result.
\begin{lemma}\label{keyR} Let $d\geq 2$ be even. Then
$$|C|=|H_j|=q^d \quad \mbox{for}~~j\in \mathbb F_q^*.$$
Moreover, if $\overline{m}\in \mathbb F_q^{d+1}\setminus \{ (0,\cdots,0)\},$ then
 $$\left |(d\sigma_c)^\vee(\overline{m})\right| \leq  q^{-d/2}$$
and
$$ \left| (d\sigma_j)^\vee(\overline{m})\right|\leq  q^{-d/2} \quad \mbox{for all}~~ j\in \mathbb F_q^*.$$
\end{lemma}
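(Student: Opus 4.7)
The plan is to compute $|C|$ and $|H_j|$ by direct counting and then to estimate the Fourier transforms via standard Gauss sum manipulations; the crucial input is that $d-1$ and $d+1$ are both \emph{odd} when $d$ is even, so the relevant power of the quadratic character $\eta$ collapses to $\eta(t)$ and we end up with a one-dimensional twisted Kloosterman-type sum of size $\sqrt{q}$.

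For the cardinalities, I would parametrize $C$ by fixing $(x_1,\ldots,x_{d-1})$ and counting pairs $(x_d,x_{d+1})$ with $x_d x_{d+1}=c$, where $c=x_1^2+\cdots+x_{d-1}^2$; this gives $2q-1$ pairs when $c=0$ and $q-1$ pairs otherwise. Since $d-1$ is odd, the standard formula for a nondegenerate diagonal quadratic form in an odd number of variables yields that $c=0$ is attained exactly $q^{d-2}$ times. The arithmetic $q^{d-2}(2q-1)+(q^{d-1}-q^{d-2})(q-1)$ collapses to $q^d$. For $H_j$, I would split on $x_{d+1}$: when $x_{d+1}=0$ the form $x_1^2+\cdots+x_d^2$ must vanish, and when $x_{d+1}\neq 0$ it must equal the nonzero value $jx_{d+1}^2$. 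Plugging the standard $d$-variable ($d$ even) counts into this split, the $\eta(-1)^{d/2}$-error terms cancel between the two cases, leaving $|H_j|=q^d$ exactly.

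For the Fourier decay, I would write the indicator of each variety via the orthogonality relation $\mathbf{1}[a=0]=q^{-1}\sum_{t\in\mathbb{F}_q}\chi(ta)$ so that, for $\overline{m}\neq 0$, only the $t\neq 0$ terms contribute. For each such $t$, completing the square in the diagonal variables produces a product of Gauss sums $\sum_x\chi(tx^2+mx)=\chi(-m^2/(4t))\,\eta(t)\,G$, with $|G|=\sqrt{q}$ and $G^2=\eta(-1)q$. For $(d\sigma_c)^\vee$ the hyperbolic pair $x_dx_{d+1}$ contributes a factor $q\,\chi(m_dm_{d+1}/t)$ (one of its two linear sums forces a delta), while the remaining $d-1$ squared variables give $\eta(t)^{d-1}G^{d-1}=\eta(t)G^{d-1}$ since $d-1$ is odd. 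For $(d\sigma_j)^\vee$ all $d+1$ squared variables contribute $\eta(t)^{d+1}G^{d+1}=\eta(t)G^{d+1}$. In both cases, summing over $t\in\mathbb{F}_q^*$ reduces to evaluating $\sum_{t\neq 0}\eta(t)\chi(M/t)$ for an explicit quadratic quantity $M$ in $\overline{m}$; the substitution $u=1/t$ (using $\eta(1/t)=\eta(t)$) turns this into $\eta(M)G$ when $M\neq 0$ and into $0$ when $M=0$, so its modulus is at most $\sqrt{q}$.

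Collecting the factors and using $|G^d|=q^{d/2}$ and $|G^{d+2}|=q^{(d+2)/2}$, then dividing by the normalization $q\cdot|V|=q^{d+1}$, gives the desired bound $q^{-d/2}$ in both cases. The main bookkeeping hurdle I expect is tracking signs and character values carefully: one must see that precisely one surplus $\eta(t)$ survives the product of Gauss sums, which requires the number of squared variables ($d-1$ for $C$, $d+1$ for $H_j$) to be odd. This is exactly the point at which the evenness of $d$ enters, and it explains why the present decay bound is unavailable for odd $d$.
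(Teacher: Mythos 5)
Your argument is correct and, for the Fourier decay, follows essentially the same route as the paper: orthogonality in the auxiliary variable $t$, completing the square to produce Gauss sums, the observation that $\eta(t)^{d-1}=\eta(t)^{d+1}=\eta(t)$ precisely because $d$ is even, and the bound $\bigl|\sum_{t\ne 0}\eta(t)\chi(M/t)\bigr|\le q^{1/2}$. The only (harmless) divergence is that you obtain $|C|=|H_j|=q^d$ by direct counting with the standard solution-count formulas for diagonal quadratic forms, whereas the paper reads the cardinalities off the same exponential-sum identity by evaluating $(d\sigma_c)^\vee$ and $(d\sigma_j)^\vee$ at $\overline{m}=(0,\dots,0)$.
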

\begin{proof}
Before we proceed with the proof, we recall  preliminary knowledge for exponential sums.
Let $\eta$ be a quadratic character of $\mathbb F_q.$
For each $a\in \mathbb F_q$,  the absolute value of the Gauss sum $G_a$  is given by
\begin{equation}\label{gauss} |G_a| :=\left| \sum_{s\in \mathbb F_q^*} \eta(s) \chi(as)\right|=\left| \sum_{s\in \mathbb F_q^*} \eta(s) \chi({a}/{s}) \right|= \left\{\begin{array}{ll} q^{\frac{1}{2}} \quad &\mbox{if} \quad a\ne 0\\
                                                  0 \quad & \mbox{if} \quad a=0. \end{array}\right.\end{equation}
It is not hard to see that
\begin{equation}\label{square} \sum_{s \in {\mathbb F}_q} \chi (a s^2) = G_1 \eta(a)  \quad \mbox{for any} \quad a \ne 0.\end{equation}
It follows from the orthogonality relations of $\chi$ and $\eta$ that
\begin{equation}\label{orthogonality} \sum_{s\in \mathbb F_q} \chi(as)=\left\{\begin{array}{ll} 0 &\quad\mbox{if}~~a\in \mathbb F_q^*\\
                                                                                 q &\quad\mbox{if}~~a=0, \end{array} \right.\end{equation}
and
$$\sum_{s\in \mathbb F_q^*} \eta(as)=\left\{\begin{array}{ll} 0 &\quad\mbox{if}~~a\in \mathbb F_q^*\\
                                                                                 q-1 &\quad\mbox{if}~~a=0. \end{array} \right.$$
For (\ref{gauss}), (\ref{square}), and (\ref{orthogonality}),  see Chapter 5 in \cite{LN97}.
Completing the square and using a change of variables,  (\ref{square}) can be generalized by the formula:
\begin{equation}\label{complete}  \sum_{s\in \mathbb F_q} \chi(as^2+bs) = G_1 \eta(a)  \chi({b^2}/{(-4a)})
\quad\mbox{for}~~a\in \mathbb F_q^*,  b\in \mathbb F_q.\end{equation}

Now we are ready to prove the lemma. First, we estimate $(d\sigma_c)^\vee.$
For $\overline{m}=(m_1,\cdots, m_{d+1})\in \mathbb F_q^{d+1},$  it follows from the orthogonality relation of $\chi$ that
\begin{align*}  (d\sigma_c)^\vee(\overline{m})=&\frac{1}{|C|} \sum_{\overline{x}\in C } \chi(\overline{m}\cdot\overline{x})\\
=& \frac{1}{q|C|} \sum_{t\in \mathbb F_q}\sum_{\overline{x}\in \mathbb F_q^{d+1}} \chi(\overline{m}\cdot\overline{x})\chi(t(x_1^2+\cdots+x_{d-1}^2-x_dx_{d+1}))\\
=& \frac{q^d}{|C|} \delta_0(\overline{m}) + \frac{1}{q|C|} \sum_{t\ne 0}\sum_{\overline{x}\in \mathbb F_q^{d+1}} \chi(\overline{m}\cdot\overline{x})\chi(t(x_1^2+\cdots+x_{d-1}^2-x_dx_{d+1}))\\
:=& \mbox{I} +\mbox{II}
\end{align*}
where $\delta_0(\overline{m})=1$ for $\overline{m}=(0,\dots,0)$, and $0$ otherwise.
Applying (\ref{complete}), we see that
$$\mbox{II}=\frac{G_1^{d-1}}{q|C|} \sum_{t\neq 0} \eta(t)^{d-1} \chi(\|m^\prime\|^2/(-4t)) \sum_{x_{d+1}\in \mathbb F_q} \chi(m_{d+1}x_{d+1})
\sum_{x_d\in \mathbb F_q} \chi((m_d-tx_{d+1})x_d),$$
where we define that $\|m^\prime\|^2=m_1^2+\cdots+ x_{d-1}^2.$
Since $d$ is even and $\eta$ is the quadratic character of $\mathbb F_q$,  we see $\eta(t)^{d-1} =\eta(t).$ In addition, notice from the orthogonality relation of $\chi$ that
$$\sum_{x_{d+1}\in \mathbb F_q} \chi(m_{d+1}x_{d+1})
\sum_{x_d\in \mathbb F_q} \chi((m_d-tx_{d+1})x_d) =q \chi(m_dm_{d+1}/t).$$
Then we obtain that for $\overline{m}\in \mathbb F_q^{d+1},$
\begin{equation}\label{form1} (d\sigma_c)^\vee(\overline{m})
=\frac{q^d}{|C|} \delta_0(\overline{m}) + \frac{G_1^{d-1}}{|C|} \sum_{t\neq 0} \eta(t) \chi((\|m^\prime\|^2-4m_dm_{d+1})/(-4t)).
\end{equation}
From the definition of $(d\sigma_c)^\vee$ and the orthogonality relation of $\eta$,  we see that
$$ 1= (d\sigma_c)^\vee(0,\cdots,0)= \frac{q^d}{|C|}.$$ Thus, we  completes the proof of
$|C|=q^d$ and it follows immediately from (\ref{form1}) and (\ref{gauss}) that
$|(d\sigma_c)^\vee(\overline{m})|\leq q^{-d/2}$ for $\overline{m}\ne (0,\dots, 0).$ \\
Next, we can directly deduce by the previous argument that if $j\in \mathbb F_q^*$ and $\overline{m}\in \mathbb F_q^{d+1}$, then
$$ (d\sigma_j)^\vee(\overline{m})
= \frac{q^d}{|H_j|} \delta_0(\overline{m}) + \frac{G_1^{d+1}}{q|H_j|} \eta(-j) \sum_{t\in \mathbb F_q^*} \eta(t) \chi( (m_{d+1}^2-j\|m\|^2)/(4jt)),$$
where $\|m\|^2=m_1^2+\cdots+ m_d^2.$ This implies that $|H_j|=q^d$ for $j\in \mathbb F_q^*$ and
$|(d\sigma_j)^\vee(\overline{m})|\leq q^{-d/2}$ for $\overline{m}\ne (0,\dots, 0).$ We leave the detail to readers.
\end{proof}
\begin{remark} If $d$ is odd, then the Fourier decays become much worse than those in conclusions of Lemma \ref{keyR}. To see this, notice that if $d$ is odd, then $\eta^{d-1}=1.$ Thus,  the term $\eta$ disappears in the formula (\ref{form1}). Consequently,  if $\overline{m}=(m^\prime, m_d, m_{d+1}) \neq (0,\dots,0)$ and $\|m^\prime\|^2-4m_dm_{d+1}=0,$ then $|(d\sigma_c)^\vee(\overline{m})|\sim q^{(-d+1)/2}.$
\end{remark}

Applying the well known Stein-Tomas argument in finite fields, Lemma \ref{keyR} enables us to deduce the $L^p-L^2$ restriction theorem for the homogeneous varieties $C$ and $H_j$ for $j\in \mathbb F_q^*.$
\begin{lemma}\label{Koh}  Let $d\geq 2$ be an even integer. Then we have
\begin{equation}\label{res1} \|\widehat{G}\|_{L^2(C, d\sigma_c)} \lesssim \|G\|_{L^{\frac{2d+4}{d+4}}(\mathbb F_q^{d+1}, d\overline{m})}\quad\mbox{for all functions}\quad G:\mathbb F_q^{d+1} \to \mathbb C.\end{equation}
We also have that if $j\in \mathbb F_q^*$, then
\begin{equation}\label{res2} \|\widehat{G}\|_{L^2(H_j, d\sigma_{j})} \lesssim \|G\|_{L^{\frac{2d+4}{d+4}}(\mathbb F_q^{d+1}, d\overline{m})}\quad\mbox{for all functions}~~ G:\mathbb F_q^{d+1} \to \mathbb C.\end{equation}
\end{lemma}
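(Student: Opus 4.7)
The plan is to apply the standard finite-field Stein--Tomas $TT^*$ argument, with Lemma \ref{keyR} as the sole analytic input. Lemma \ref{keyR} tells us that $C$ (and each $H_j$) sits in $\mathbb{F}_q^{d+1}$ with $|C|=q^d$ and \emph{optimal} Fourier decay $|(d\sigma_c)^\vee(\overline{m})| \leq q^{-d/2}$ off the origin. A direct check shows that the target exponent satisfies $(2d+4)/(d+4) = 2((d+1)+1)/((d+1)+3)$, which is exactly the classical Stein--Tomas threshold in the ambient dimension $n=d+1$, so one expects the standard machinery to deliver the bound with an absolute constant.

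First, by duality, (\ref{res1}) is equivalent to the extension estimate $\|(F d\sigma_c)^\vee\|_{L^{(2d+4)/d}(\mathbb{F}_q^{d+1},d\overline{m})} \lesssim \|F\|_{L^2(C,d\sigma_c)}$. A routine $TT^*$ computation -- identifying the composition of the extension operator with its adjoint as convolution against $K := (d\sigma_c)^\vee$ -- reduces the problem to proving the convolution inequality
\begin{equation*}
\|G * K\|_{L^{(2d+4)/d}(d\overline{m})} \lesssim \|G\|_{L^{(2d+4)/(d+4)}(d\overline{m})}
\end{equation*}
for every $G : \mathbb{F}_q^{d+1} \to \mathbb{C}$. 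I would then decompose $K = \delta_0 + K'$, where $K(0)=1$ forces the subtraction and the remainder satisfies $\|K'\|_{L^\infty} \leq q^{-d/2}$ by Lemma \ref{keyR}. Convolution with $\delta_0$ is the identity, and the norm nesting $\|G\|_{L^{p'}} \leq \|G\|_{L^p}$ (valid under counting measure whenever $p \leq p'$) handles this contribution for free.

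For the oscillatory remainder $K'$, I would establish two endpoint bounds for the operator $TG := G * K'$: an $L^1 \to L^\infty$ bound of $q^{-d/2}$, from Young's inequality and the pointwise control $\|K'\|_{L^\infty} \leq q^{-d/2}$; and an $L^2 \to L^2$ bound of $q$, from Plancherel after observing via Fourier inversion that $\widehat{K'}(x) = q\,\mathbf{1}_C(x) - 1$, so $\|\widehat{K'}\|_{L^\infty} \leq q$. Applying Riesz--Thorin interpolation with parameter $\theta = d/(d+2)$, chosen so that $1/p' = \theta/2$, yields the operator-norm bound
\begin{equation*}
(q^{-d/2})^{2/(d+2)} \cdot q^{d/(d+2)} = q^{-d/(d+2)+d/(d+2)} = 1,
\end{equation*}
which is precisely what is needed. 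The proof of (\ref{res2}) is identical, replacing $(d\sigma_c)^\vee$ by $(d\sigma_j)^\vee$.

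The main obstacle, if one can call it that, is not analytical but combinatorial: one must check that the $L^1 \to L^\infty$ constant $q^{-d/2}$ and the $L^2 \to L^2$ constant $q$ balance \emph{exactly} after interpolation at the precise exponent $p' = (2d+4)/d$. This balance is not an accident but a reflection of the fact that $q^{-d/2}$ is the square-root cancellation one expects for a hypersurface of size $q^d$ in a $(d+1)$-dimensional ambient space, so the Stein--Tomas threshold and the Fourier decay provided by Lemma \ref{keyR} are perfectly matched.
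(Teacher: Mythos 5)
Your proposal is correct and follows essentially the same route as the paper: the duality/$TT^*$ reduction to the convolution inequality $\|G\ast(d\sigma_c)^\vee\|_{L^{(2d+4)/d}}\lesssim\|G\|_{L^{(2d+4)/(d+4)}}$, the decomposition $(d\sigma_c)^\vee=\delta_0+K$ with the $\delta_0$ term absorbed by norm nesting under counting measure, and interpolation between the $L^1\to L^\infty$ bound $q^{-d/2}$ and the $L^2\to L^2$ bound $q$ (via $\widehat{K}(\overline{x})=qC(\overline{x})-1$) at exactly the exponent where the constants balance to $1$. The paper's proof is the same argument, merely phrased on the restriction side via H\"older rather than through the dual extension estimate.
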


\begin{proof}
Since the proof of  (\ref{res1}) is exactly same as that of  (\ref{res2}),
we shall only introduce the proof of  (\ref{res1}).
By duality and H\"{o}lder's inequality, we see
\begin{align*} \|\widehat{G}\|^2_{L^2(C,d\sigma_c)} &=\sum_{\overline{m}\in \mathbb F_q^{d+1}} G(\overline{m})~ \overline{ (G\ast (d\sigma_c)^\vee)(\overline{m})}\\
&\leq\|G\|_{L^{\frac{2d+4}{d+4}}(\mathbb F_q^{d+1}, d\overline{m})} \|G\ast (d\sigma_c)^\vee \|_{L^{\frac{2d+4}{d}}(\mathbb F_q^{d+1}, d\overline{m})}.\end{align*}
It is enough to prove that for every function $G$ on $(\mathbb F_q^{d+1},d\overline{m}),$
$$\|G\ast (d\sigma_c)^\vee \|_{L^{\frac{2d+4}{d}}({\mathbb F_q^{d+1}}, d\overline{m})}\lesssim \|G\|_{L^{\frac{2d+4}{d+4}}({\mathbb F_q^{d+1}}, d\overline{m})}.$$

Define $ K(\overline{m})=(d\sigma_c)^\vee(\overline{m}) -\delta_0(\overline{m}).$ Since $(d\sigma)^\vee(0,\dots,0)=1$, we see that $ K(\overline{m})=0$ for $m=(0,\dots,0)$, and $K(\overline{m})=(d\sigma_c)^\vee(\overline{m})$ for $\overline{m}\in {\mathbb F_q^{d+1}}\setminus \{(0,\dots,0)\}.$ Since $G\ast (d\sigma_c)^\vee=G\ast \delta_0 + G\ast K,$ it will be enough to prove the following two inequalities:
\begin{equation}\label{E1} \|G\ast \delta_0 \|_{L^{\frac{2d+4}{d}}({\mathbb F_q^{d+1}}, d\overline{m})}\lesssim \|G\|_{L^{\frac{2d+4}{d+4}}({\mathbb F_q^{d+1}}, d\overline{m})}
\end{equation}
and
\begin{equation}\label{E2}\|G\ast K \|_{L^{\frac{2d+4}{d}}({\mathbb F_q^{d+1}}, d\overline{m})}\lesssim \|G\|_{L^{\frac{2d+4}{d+4}}({\mathbb F_q^{d+1}}, d\overline{m})}.
\end{equation}
Since $G\ast \delta_0(\overline{m})=G(\overline{m})$ for $\overline{m}\in (\mathbb F_q^{d+1}, d\overline{m}),$  (\ref{E1}) follows by observing that
$$\begin{array}{ll} \|G\ast \delta_0 \|_{L^{\frac{2d+4}{d}}({\mathbb F_q^{d+1}}, d\overline{m})}
&=\|G \|_{L^{\frac{2d+4}{d}}({\mathbb F_q^{d+1}}, d\overline{m})}\\
&\leq \|G\|_{L^{\frac{2d+4}{d+4}}({\mathbb F_q^{d+1}}, d\overline{m})},\end{array}$$
where the last line follows from the facts that $d\overline{m}$ is the counting measure and
$(2d+4)/(d+4) <(2d+4)/d.$ In order to prove (\ref{E2}),  we assume for a moment that
\begin{equation}\label{two} \|G\ast K\|_{L^2({\mathbb F_q^{d+1}}, d\overline{m})}\lesssim q \|G\|_{L^2(\mathbb F_q^{d+1},d\overline{m})}
\end{equation}
and
\begin{equation}\label{infty}\|G\ast K\|_{L^\infty({\mathbb F_q^{d+1}}, d\overline{m})}\lesssim q^{-\frac{d}{2}} \|G\|_{L^1(\mathbb F_q^{d+1},d\overline{m})}.
\end{equation}
Then (\ref{E2}) follows immediately by interpolating (\ref{two}) and (\ref{infty}). Thus, our final task is to show that  both (\ref{two}) and (\ref{infty}) hold.    As a direct consequence from the Plancherel theorem, (\ref{two}) can be proved. Indeed, we have
$$ \begin{array}{ll} \|G\ast K\|_{L^2(\mathbb F_q^{d+1}, d\overline{m})}&=\|\widehat{G}\widehat{K}\|_{L^2({\mathbb F_q^{d+1}}, d\overline{x})}\\
&\leq \|\widehat{K}\|_{L^\infty({\mathbb F_q^{d+1}}, d\overline{x})} \|\widehat{G}\|_{L^2({\mathbb F_q^{d+1}}, d\overline{x})}\\
&< q\|G\|_{L^2({\mathbb F_q^{d+1}}, d\overline{m})},\end{array}$$
where the last line is obtained by observing that for each $\overline{x}\in (\mathbb F_q^{d+1},d\overline{x})$\\
$|\widehat{K}(\overline{x})|= |d\sigma_c(\overline{x})-\widehat{\delta_0}(\overline{x})|=|q^{d+1}|C|^{-1} C(\overline{x})-1| < q.$
Now, we  prove (\ref{infty}). It follows from Young's inequality that
$$\|G\ast K\|_{L^\infty({\mathbb F_q^{d+1}}, d\overline{m})}\leq \| K\|_{L^\infty({\mathbb F_q^{d+1}}, d\overline{m})} \|G\|_{L^1({\mathbb F_q^{d+1}}, d\overline{m})}.$$
From the definition of $K$ and the Fourier decay estimate in Lemma \ref{keyR}, we conclude that
(\ref{infty}) holds. Thus, the proof is complete.

\end{proof}

\section{Proofs of Theorem \ref{main3} and \ref{main4}} \label{connection}
As a key ingredient of proving our main results, we use the relation between the restriction theorem for $C$ and $H_j$ in $\mathbb F_q^{d+1}$ and the weak restriction theorem for paraboloids and spheres in $\mathbb F_q^d.$ Theorem \ref{main3} shall be deduced from (\ref{res1}) in Lemma \ref{Koh}. Similarly, we shall prove Theorem \ref{main4} by applying (\ref{res2}) in Lemma \ref{Koh}.
\subsection{Proof of Theorem \ref{main3} }
We must prove that if $d\geq 2$ is even, then
$$\|\hat{g}\|_{L^2(P, d\sigma)} \lesssim \|g\|_{L^{(2d+4)/(d+4) }(\mathbb F_q^d, dm)} \quad \mbox{for all $d$-coordinate lay functions}\quad  g:\mathbb F_q^{d} \to \mathbb C.$$
Given a $d$-coordinate lay function $g: (\mathbb F_q^d, dm) \to \mathbb C,$  we define
$G_g: (\mathbb F_q^{d+1}, d\overline{m}) \to \mathbb C$ by the relation
\begin{equation}\label{DG} \widehat{G_g}(x^\prime, x_d, s)=\left\{\begin{array}{ll} \widehat{g}(x^\prime, x_d s)&\quad\mbox{if}~~s\ne 0\\
0&\quad\mbox{if}~~s= 0, \end{array} \right.\end{equation}
where $(x^\prime, x_d, s) \in (\mathbb F_q^{d+1}, d\overline{x})$ with $x^\prime\in \mathbb F_q^{d-1}, x_d, s\in \mathbb F_q.$
We need the  explicit form of $G_g.$
\begin{proposition}
For $(m, l)\in \mathbb F_q^d \times \mathbb F_q,$
$$ G_g(m, l) =\frac{g(m)}{q} \sum_{s\in \mathbb F_q^*} \chi(ls).$$
\end{proposition}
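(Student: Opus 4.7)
The plan is to compute $G_g(m,l)$ by applying the Fourier inversion formula and then exploiting the $d$-coordinate lay property of $g$ to eliminate the dependence on $s$ inside the sum.

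First, I would write $m = (m', m_d) \in \mathbb{F}_q^{d-1}\times \mathbb{F}_q$ and apply (\ref{FI}) in dimension $d+1$:
\begin{equation*}
G_g(m,l) = \frac{1}{q^{d+1}} \sum_{x' \in \mathbb{F}_q^{d-1}}\sum_{x_d\in \mathbb{F}_q}\sum_{s\in \mathbb{F}_q} \chi\bigl(m'\cdot x' + m_d x_d + ls\bigr)\, \widehat{G_g}(x',x_d,s).
\end{equation*}
By the defining relation (\ref{DG}), the $s=0$ contribution vanishes, so the sum is over $s\in \mathbb{F}_q^*$ with $\widehat{G_g}(x',x_d,s) = \widehat{g}(x', x_d s)$.

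Next, for each fixed $s \in \mathbb{F}_q^*$, I would perform the change of variable $y_d = x_d s$ in the $x_d$-sum (using that $s \ne 0$) to decouple $x_d$ from $s$ inside $\widehat{g}$. The $x_d$-sum then becomes $\sum_{y_d}\chi(m_d s^{-1} y_d)\widehat{g}(x',y_d)$, and the full $(x',y_d)$-sum is exactly the Fourier inversion of $\widehat{g}$ at the point $(m', m_d s^{-1})$, contributing a factor of $q^d g(m', m_d s^{-1})$. This reduces the expression to
\begin{equation*}
G_g(m,l) = \frac{1}{q}\sum_{s\in \mathbb{F}_q^*} \chi(ls)\, g(m', m_d s^{-1}).
\end{equation*}

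Finally, I would invoke the $d$-coordinate lay hypothesis: for every $s \in \mathbb{F}_q^*$, $g(m', m_d s^{-1}) = g(m', m_d) = g(m)$ (applying the definition with the nonzero scalar $s^{-1}$; the case $m_d = 0$ is trivial). Pulling the constant $g(m)$ out of the sum yields the claimed identity. The calculation is essentially routine Fourier inversion, and there is no real obstacle beyond correctly handling the change of variable on $x_d$ and recognizing the lay condition in the form $s \mapsto s^{-1}$.
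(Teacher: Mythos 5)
Your argument is correct and follows essentially the same route as the paper: Fourier inversion in $d+1$ dimensions, the substitution $x_d \mapsto x_d/s$ to recognize $q^d\, g(m', m_d/s)$, and then the $d$-coordinate lay property (applied with the scalar $s^{-1}$) to pull $g(m)$ out of the $s$-sum. No gaps.
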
\label{Pro}
\begin{proof} By the Fourier inversion theorem (\ref{FI}) for $d+1$ dimensions, and the definition of $\widehat{G_g}$ in (\ref{DG}),  we see that
if $(m^\prime, m_d, l) \in \mathbb F_q^{d-1}\times \mathbb F_q \times \mathbb F_q =\mathbb F_q^{d+1},$ then
\begin{align*} G_g(m^\prime, m_d, l)=&\frac{1}{q^{d+1}} \sum_{x^\prime\in \mathbb F_q^{d-1}, x_d, s\in \mathbb F_q} \chi(m^\prime \cdot x^\prime+ m_dx_d+ls) ~\widehat{G_g}(x^\prime, x_d, s)\\
=& \frac{1}{q^{d+1}} \sum_{s\neq 0} \sum_{(x^\prime, x_d)\in \mathbb F_q^d} \chi(m^\prime \cdot x^\prime+ m_dx_d+ls) ~\widehat{g}(x^\prime, x_d s).
\end{align*}
By a change of variables, $x_d\to x_d/s,$ and the Fourier inversion formula (\ref{FI}),
\begin{align*}G_g(m^\prime, m_d, l)=&\frac{1}{q^{d+1}} \sum_{s\ne 0} \chi(ls) \sum_{x\in \mathbb F_q^d} \chi(x\cdot (m^\prime, m_d/s))~ \widehat{g}(x)\\
=&\frac{1}{q} \sum_{s\ne 0} \chi(ls)~ g(m^\prime, m_d/s).\end{align*}
Since $g$ is a $d$-coordinate lay function,  $ g(m^\prime, m_d/s)=g(m)$ for all $s\in \mathbb F_q^*.$
Hence, the proof of Proposition \ref{Pro} is complete.\end{proof}
We continue to prove Theorem \ref{main3}. It is enough to show that
$$ \|\widehat{g}\|^2_{L^2(P, d\sigma)} \lesssim \|g\|^2_{L^{(2d+4)/(d+4)} (\mathbb F_q^d, dm )}.$$
Since $|C|=q^{d}=q|P|,$ it follows that
\begin{align*}\|\widehat{g}\|^2_{L^2(P, d\sigma)} =&\frac{1}{|P|}\sum_{x\in P} |\widehat{g}(x)|^2 \sim \frac{1}{|C|} \sum_{s\in \mathbb F_q^*} \sum_{x\in P} |\widehat{g}(x)|^2\\
=& \frac{1}{|C|} \sum_{s\in\mathbb F_q^*} \sum_{\substack{(x^\prime, x_d)\in \mathbb F_q^d\\
: x_1^2+\cdots+x_{d-1}^2=x_d s}} |\widehat{g}(x^\prime, x_d s)|^2\\
=&\|\widehat{G_g}\|^2_{L^2(C, d\sigma_c)}, \end{align*}
where the last line follows from (\ref{DG}).
By (\ref{res1}) in Lemma \ref{Koh},  to prove Theorem \ref{main3},  it therefore suffices to show that
$$ \|G_g\|^2_{L^{(2d+4)/(d+4)}(\mathbb F_q^{d+1}, d\overline{m})}\lesssim \|g\|^2_{L^{(2d+4)/(d+4)}(\mathbb F_q^{d}, dm)}.$$ Letting $\alpha=(2d+4)/(d+4) >1,$  it will be enough to prove that 
$$ \|G_g\|^{\alpha}_{L^{\alpha}(\mathbb F_q^{d+1}, d\overline{m})}\lesssim \|g\|^{\alpha}_{L^{\alpha}(\mathbb F_q^{d}, dm)}.$$
From the explicit form of  $G_g$  in Proposition \ref{Pro},  it follows that
\begin{align*}  \|G_g\|^{\alpha}_{L^{\alpha}(\mathbb F_q^{d+1}, d\overline{m})} =&\sum_{(m,l)\in \mathbb F_q^d \times \mathbb F_q} |G_g(m,l)|^{\alpha}\\
 =& \sum_{l\in \mathbb F_q} \sum_{m\in \mathbb F_q^d} |g(m)|^{\alpha} \left|q^{-1} \sum_{s\in \mathbb F_q^*} \chi(ls)\right|^{\alpha}\\
=& \sum_{m\in \mathbb F_q^d} |g(m)|^{\alpha} (q^{-1} (q-1) )^{\alpha} + \sum_{l\ne 0}
\sum_{m\in \mathbb F_q^d} q^{-\alpha} |g(m)|^{\alpha}\\
 \leq& \sum_{m\in \mathbb F_q^d} |g(m)|^{\alpha}+ q^{-\alpha} (q-1)\sum_{m\in \mathbb F_q^d} |g(m)|^{\alpha}  \leq 2 \|g\|^{\alpha}_{L^{\alpha}(\mathbb F_q^d, dm)}.
\end{align*}
Thus, the proof of Theorem \ref{main3} is complete.
\subsection{Proof of Theorem \ref{main4}}
We aim to prove that for every $j\in \mathbb F_q^*,$
$$\|\widehat{g}\|_{L^2(S_j, d\sigma)} \lesssim \|g\|_{L^{(2d+4)/(d+4)}(\mathbb F_q^d, dm)}$$
 for all homogeneous functions of degree zero  $g:\mathbb F_q^{d} \to \mathbb C.$
Let  $g:(\mathbb F_q^d, dm) \to \mathbb C$ be a homogeneous function of degree zero. By the definition of the homogeneous function of degree zero, we see that  for every $t\in \mathbb F_ q^*$ and $x\in (\mathbb F_q^d, dx),$
$$ \widehat{g}(x)=\sum_{m\in \mathbb F_q^d} \chi(-m\cdot x) g(m/t).$$
From this observation and a change of variables, $m \to tm,$ it follows that

\begin{align*}\|\widehat{g}\|^2_{L^2(S_j, d\sigma)}=&\frac{1}{|S_j|(q-1)} \sum_{t\in \mathbb F_q^*}\sum_{x\in S_j} \left| \sum_{m\in \mathbb F_q^d} \chi(-m\cdot x) g(m/t) \right |^2\\
=&\frac{1}{|S_j|(q-1)} \sum_{t\in \mathbb F_q^*}\sum_{x\in S_j} \left| \sum_{m\in \mathbb F_q^d} \chi(-tm\cdot x) g(m) \right |^2\\
=&\frac{1}{|S_j|(q-1)} \sum_{t\in \mathbb F_q^*}\sum_{\substack{x\in \mathbb F_q^d\\
: x_1^2+\cdots+x_d^2=j}} \left| \sum_{m\in \mathbb F_q^d} \chi(-m\cdot tx) g(m) \right |^2.\end{align*}

Applying a change of variables, $x\to x/t$,  we have
\begin{align*}\|\widehat{g}\|^2_{L^2(S_j, d\sigma)}=&\frac{1}{|S_j|(q-1)} \sum_{\substack{t\in \mathbb F_q^*, x\in \mathbb F_q^d\\: x_1^2+\cdots+x_d^2=jt^2}}\left| \sum_{m\in \mathbb F_q^d} \chi(-m\cdot x) g(m) \right |^2\\
\leq& \frac{1}{|S_j|(q-1)} \sum_{(x, t)\in H_j} \left| \sum_{m\in \mathbb F_q^d} \chi(-m\cdot x) g(m) \right |^2.
\end{align*}

Now, consider a function $G_g:(\mathbb F_q^{d+1}, d\overline{m})\to \mathbb C$ defined by
$$ G_g(m, m_{d+1})= \left\{ \begin{array}{ll}0 \quad &\mbox{if}~~m_{d+1}\neq 0,\\
                                                          g(m) \quad&\mbox{if}~~m_{d+1} =0.\end{array}\right.$$
Then the last expression above can be written by
$$ \frac{1}{|S_j|(q-1)} \sum_{(x, t)\in H_j}  \left| \sum_{(m, m_{d+1})\in \mathbb F_q^d\times \mathbb F_q} \chi(-m\cdot x) \chi(-m_{d+1}\cdot t) G_g(m, m_{d+1})\right|^2.$$
Since $|S_j|(q-1) \sim q^d= |H_j|,$  we see that
$$\|\widehat{g}\|^2_{L^2(S_j, d\sigma_j)} \lesssim \frac{1}{|H_j|} \sum_{(x,t)\in H_j} |\widehat{G_g}(x,t)|^2 =\|\widehat{G_g}\|^2_{L^2(H_j, d\sigma_j )}.$$
Applying  (\ref{res2}) in Lemma \ref{Koh}, we conclude from the definition of $G_g$  that
$$\|\widehat{g}\|^2_{L^2(S_j, d\sigma)}  \lesssim \|G_g\|^2_{L^{(2d+4)/(d+4)}(\mathbb F_q^{d+1}, d\overline{m})} =\|g\|^2_{L^{(2d+4)/(d+4)}(\mathbb F_q^d, dm)},$$
which completes the proof.

\end{document}